\newcommand{\ee}{\mathbb{E}}
\newcommand{\mse}{\text{MSE}}
\newtheorem{theorem}{Theorem}[section]
\newtheorem{lemma}[theorem]{Lemma}
\newtheorem{definition}[theorem]{Definition}
\begin{document}
%\title{Adaptive Estimation of Noise Variance and Matrix Estimation via USVT Algorithm}
\title[Estimation of Noise Variance and Matrix Estimation]{Adaptive Estimation of Noise Variance and Matrix Estimation via USVT Algorithm}
\author{Mona Azadkia}

\address{\newline Department of Statistics \newline Stanford University\newline Sequoia Hall, 390 Jane Stanford Way \newline Stanford, CA 94305\newline \newline \textup{\tt mazadkia@stanford.edu}} 
\keywords{Matrix denoising, Variance estimation, Singular value thresholding, Adaptive estimation}

\maketitle
\begin{abstract}
We propose a method for estimating the entries of a large noisy matrix when the variance of the noise, $\sigma^2$, is unknown without putting any assumption on the rank of the matrix. We consider the estimator for $\sigma$ introduced by Gavish and Donoho \cite{Gavish} and give an upper bound on its mean squared error. Then with the estimate of the variance, we use a modified version of the Universal Singular Value Thresholding (USVT) algorithm introduced by Chatterjee \cite{Chatterjee} to estimate the noisy matrix. Finally, we give an upper bound on the mean squared error of the estimated matrix. 
\end{abstract}

\section{Introduction} 
Consider the statistical estimation problem where the unknown parameters of interest are the entries of a $m\times n$ matrix $M$, where $m\leq n$. Suppose we have observed the $m\times n$ matrix $X$, a noisy version of $M$, that is $X = M + \sigma A$. The entries of $A$ are i.i.d. with mean zero and variance one and $\sigma$ is an unknown positive constant. The goal is to recover $M$ from this noisy observation $X$ and give an upper bound on the mean squared error ($\mse$) of its estimate. Given an estimator $\hat{M}$ for $M$, the $\mse(\hat{M})$ is defined as
\begin{eqnarray}
\mse(\hat{M}) := \ee\Big[\frac{1}{mn}\sum_{i=1}^m\sum_{j=1}^n(m_{ij} - \hat{m}_{ij})^2\Big],
\end{eqnarray}
where $m_{ij}$ and $\hat{m}_{ij}$ are respectively the $(i, j)$th entries of the matrices $M$ and $\hat{M}$. 

The problem of estimating the entries of a large matrix from noisy and/or incomplete observations has been studied widely. A common approach for solving this problem is to assume that $M$ is a low-rank matrix. Under the low-rank assumption there is a huge body of work using spectral methods, such as \cite{Azar, Achlioptas, Keshavan1, Keshavan2, Gavish, Mazumder-Hastie-Tibshirani}. Some other works under certain model assumptions are \cite{Fazel, Rodhe-Tsybakov}.

In a different direction, Emmanuel Cand\`{e}s and his collaborators \cite{Candes-Recht, Candes-Plan, Candes-Tao, Candes-Shen-Cai} studied this problem by penalizing the nuclear norm of the matrix under convex constraints. Some other notable examples of this penalization approach are \cite{Gavish-Donoho, Koltchinskii, Koltchinskii-Lounici-Tsybakov, Negahban-Wainwright, Rennie}.

In 2015, Chatterjee \cite{Chatterjee} proposed a simple estimation procedure, the USVT algorithm. In that work, Chatterjee considered the problem of matrix estimation without putting any assumption on the rank of the matrix $M$. However, in \cite{Chatterjee} he assumed that the noise entries and therefore the entries of $X$ lie in a bounded interval. Although he added that the results should stay valid when the entries of $X-M$ are $N(0, \sigma^2)$ when $\sigma$ is known, but the problem of estimating $M$ when $\sigma$ is unknown, remained unsolved. 

Later, in \cite{Gavish} Gavish and Donoho proposed an estimator for $\sigma$ based on the observed matrix $X$. In that work, using the Mar\v{c}henko-Pastur law, they construct their estimator as a function of the median singular value of matrix $X$. They showed that this estimate almost surely converges to $\sigma$ as $n$ goes to infinity under this assumption that the additive noise is orthogonally invariant. In a different work, Nadler and Kritchman \cite{Nadler}, proposed an iterative algorithm for estimating the unknown $\sigma$. 

In this paper, we consider the problem of matrix estimation when the variance of the noise, $\sigma$, is unknown. We do not put any assumption on the rank of the matrix or boundedness of its entries. However, we consider the mild assumption that entries of the noise matrix $A$ are i.i.d. and sub-Gaussian. First, we give an upper bound on the mean squared error of the estimator of $\sigma$ from \cite{Gavish}, $\hat{\sigma}$. Using the estimate of $\sigma$, we modify the USVT algorithm \cite{Chatterjee} and give an estimate of $M$, $\hat{M}$, based on the observed matrix $X$. At last we give an upper bound on the mean squared error of $\hat{M}$. 

In section \ref{Set up and Main Results} we study the estimator of the variance. Theorem~\ref{MSESigmathm} gives an upper bound on the mean squared error of this estimate. Then we give an estimator of $M$ and in Theorem~\ref{MSEMthm} give an upper bound on the mean squared error of this estimator of $M$. In section~\ref{Simulation} we study a simulated example. Proofs of all theorems and lemmas are in section~\ref{Proofs}.

\section{Set up and Main Results} \label{Set up and Main Results}
Consider the $m \times n$ random matrix $X = [x_{ij}]$. Without loss of generality, we let $m\leq n$. Let $X$ be a noisy version of $M$, $X = M + \sigma A$, where $M = [m_{ij}]$ is a deterministic unknown matrix and $A = [\epsilon_{ij}]$ is a random matrix with  i.i.d. entries independent of $M$ and $\ee[\epsilon_{ij}] = 0$ and $\ee[\epsilon_{ij}^2] = 1$. Noise level $\sigma > 0$ is an unknown deterministic constant. The final goal is to estimate the entries of $M = [m_{ij}]\in\mathbb{R}^{m\times n}$. 

Let's assume that $\sigma$ is known and $|\epsilon_{ij}|\leq 1$, then following the steps of USVT algorithm \cite{Chatterjee} bellow, we find $\hat{M}$ an estimate of $M$.
{\centering
\begin{enumerate}
\item Let $\sum_{i=1}^m \lambda_i(X) u_i v_i^T$  be the singular value decomposition of $X$, where $\lambda_1(X)\geq\ldots\geq\lambda_m(X)$ are the singular values of $X$ and $u_i$ and $v_i$ are its left and right singular vectors.

\item Choose a small positive number $\eta\in (0,1]$ and let $S$ be the set of ``thresholded singular values",
\[
S_{\sigma} := \{i: \lambda_i(X) \geq (2+\eta)\sigma\sqrt{n}\}.
\]

\item Define $\hat{M} := \sum_{i\in S_{\sigma}} \lambda_i(X) u_i v_i^T$.
\end{enumerate}
}
Note that set $S_{\sigma}$ depends on $\sigma$ and also uses the earlier assumption on boundedness of $\epsilon_{ij}$'s.

To continue, first we introduce some notations and definitions.

\subsection{Definitions and Notation}
Let $X = [x_{ij}]$ be $m\times n$, where $m \leq n$, a random matrix with $x_{ij}$'s i.i.d. and $\ee[x_{ij}] = 0$ and $\ee[x_{ij}^2] = 1$. Let
\[
Y_{n} = \frac {1}{n}XX^{T},
\] 
and let $\lambda_1(Y_n)\geq \cdots\geq \lambda_m(Y_n)$ be the singular values of $Y_{n}$. Define $\mu _{m}$ as a random counting measure,
\[
\mu _{m}(A) = \frac{1}{m}\#\left\{\lambda _{j}(Y_n)\in A\right\},\quad A\subset \mathbb{R}.
\]

\textbf{Mar\v{c}henko-Pastur Law}: Let $m,\,n\,\to \,\infty$ such that $m/n\,\to \,\gamma \in (0,1)$ . Then $\mu _{m}\,\to \,\mu_{\text{MP}}^\gamma$ in distribution, where

\[
d\mu_{\text{MP}}^\gamma(x)={\frac {1}{2\pi}}{\frac {\sqrt {(\gamma _{+}-x)(x-\gamma _{-})}}{\gamma x}}\,\mathbf {1} _{[\gamma _{-},\gamma _{+}]}\,dx 
\]
with $\gamma _{\pm }=(1\pm {\sqrt {\gamma}})^{2}$.

Let $\mathbb{F}_\gamma(x)$ and $\mathbb{F}_n$ be the cumulative distribution function (cdf) of $\mu_{\text{MP}}^\gamma$ and $\mu _{m}$ respectively
\begin{eqnarray*}
\mathbb{F}_\gamma(x) = \int_{\gamma_{-}}^xd\mu_{\text{MP}}^\gamma(t), \qquad \mathbb{F}_n(x) = \int_{0}^x\,d\mu_{m}(t).
%& = &\int_{\gamma_{-}}^{x}\frac{1}{2\pi}\frac{\sqrt{(\gamma_{+} - t)(t - \gamma_{-})}}{\gamma t} dt.
\end{eqnarray*}

\begin{definition} 
For the $m\times n$ matrix $X$ consider the singular value decomposition $X = U\Sigma V^T$, where $\Sigma$ is a $m\times n$ diagonal matrix with diagonal entries $\lambda_1(X) \geq\cdots\geq \lambda_m(X)\geq 0$, and $U$ and $V$ are unitary matrices of size $m\times m$ and $n\times n$ respectively. The nuclear norm of $X$ is defined as the sum of its singular values,
\begin{eqnarray*}
\|X\|_* := \sum_{i=1}^m\lambda_{i}(X).
\end{eqnarray*}
\end{definition}

\subsection{Estimation of $\sigma$}
We consider the following estimator of $\sigma$ that was proposed by Donoho and Gavish \cite{Gavish},
\begin{eqnarray*}
\hat{\sigma}(X) = \frac{\text{med}(\lambda_i(X))}{\sqrt{n\mu_\gamma}}, 
\end{eqnarray*}
where $\text{med}(\lambda_i(X))$ is the median of $\lambda_i(X)$'s and
\begin{eqnarray}\label{eq:MuGamma}
\mu_\gamma = \mathbb{F}_\gamma^{-1}(\frac{1}{2}).
\end{eqnarray}

In \cite{Gavish}, Donoho and Gavish have shown that 
\begin{eqnarray*}
\lim_{n\rightarrow\infty}\hat{\sigma}(X) = \sigma \qquad\text{a.s.}
\end{eqnarray*}
They have made this conclusion under an asymptotic frame work in which they have assumed that the distribution of $A$ is \textit{orthogonally invariant}, meaning that for $R_1$ and $R_2$, $m\times m$ and $n\times n$ orthogonal matrices, $A$ and $R_1 A R_2$ have the same distribution. We consider this estimator without the extra invariance assumption on the noise distribution. Theorem~\ref{MSESigmathm} provides an upper bound on the $\mse$ of $\hat{\sigma}(X)$.

\begin{theorem} \label{MSESigmathm}
Let $X = [x_{ij}]$ be a $m\times n$ random matrix with $x_{ij}$'s distributed i.i.d. from a sub-Gaussian distribution such that $\ee(x_{ij}) = m_{ij}$ and $Var(x_{ij}) = \sigma^2$ for unknown values of $\sigma$ and $m_{ij}$'s. For  
\begin{eqnarray*}
\hat{\sigma} = \frac{\emph{\text{med}}(\lambda_i(X))}{\sqrt{n\mu_\gamma}},
\end{eqnarray*}
we have
\begin{eqnarray*}
\ee\Big[(\hat{\sigma} - \sigma)^2\Big] &\leq & \frac{2\|M\|_*^2}{\mu_\gamma n^2} + \frac{\sigma^2 C_\gamma}{\mu_\gamma n} +  \frac{\sigma^2 C_{\gamma, \epsilon}}{\mu_\gamma n^{1 - \epsilon}},
\end{eqnarray*}
where $C_{\gamma, \epsilon}$ and $C_\gamma$ are non-negative constants independent of $n$ and $\sigma$.
\end{theorem}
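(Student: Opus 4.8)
The plan is to reduce the mean squared error of $\hat\sigma$ to a statement about a single (median) singular value and then split that into a deterministic ``signal'' contribution coming from $M$ and a random ``noise'' contribution coming from $A$. Writing $\hat\sigma-\sigma=\frac{1}{\sqrt{n\mu_\gamma}}\big(\text{med}(\lambda_i(X))-\sigma\sqrt{n\mu_\gamma}\big)$, I would insert the pure-noise median $\sigma\,\text{med}(\lambda_i(A))$ and apply $(a+b)^2\le 2a^2+2b^2$ to obtain
\[
\ee[(\hat\sigma-\sigma)^2]\le \frac{2}{n\mu_\gamma}\,\ee\big[(\text{med}(\lambda_i(X))-\sigma\,\text{med}(\lambda_i(A)))^2\big]+\frac{2\sigma^2}{n\mu_\gamma}\,\ee\big[(\text{med}(\lambda_i(A))-\sqrt{n\mu_\gamma})^2\big].
\]
The first summand is engineered to produce the $\|M\|_*$-term (note the matching factor $2$), and the second to produce the two $\sigma^2$-terms.

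For the signal term, observe that $X-\sigma A=M$ and that the two medians are the same $k$-th order statistic, $k=\lceil m/2\rceil$, of the decreasingly sorted singular-value sequences of $X$ and of $\sigma A$. The Wielandt--Hoffman (Mirsky) inequality for singular values, $\sum_i(\lambda_i(X)-\sigma\lambda_i(A))^2\le \|X-\sigma A\|_F^2=\|M\|_F^2$ (with $\|\cdot\|_F$ the Frobenius norm), bounds any single matched coordinate, so in particular $(\text{med}(\lambda_i(X))-\sigma\,\text{med}(\lambda_i(A)))^2\le\|M\|_F^2\le\|M\|_*^2$. This is the mechanism behind the first term: it exhibits the $\|M\|_*^2$-dependence, and pinning down the precise power of $n$ is then a book-keeping matter in this perturbation/normalization step, where one further exploits that $M$ disturbs chiefly the largest singular values while the median sits in the bulk.

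For the noise term the medians refer to the matrix $A$ with standardized i.i.d.\ sub-Gaussian entries, which is precisely the Mar\v{c}henko--Pastur setup stated above applied to $A$. Setting $\hat\mu:=\text{med}\big(\lambda_i(\tfrac1n AA^T)\big)$ we have $\text{med}(\lambda_i(A))=\sqrt{n\hat\mu}$, and the elementary bound $(\sqrt a-\sqrt b)^2\le (a-b)^2/b$ (valid for $a\ge0$, $b>0$) gives
\[
\ee\big[(\text{med}(\lambda_i(A))-\sqrt{n\mu_\gamma})^2\big]=n\,\ee\big[(\sqrt{\hat\mu}-\sqrt{\mu_\gamma})^2\big]\le \frac{n}{\mu_\gamma}\,\ee\big[(\hat\mu-\mu_\gamma)^2\big].
\]
Since $\mathbb{F}_\gamma(\mu_\gamma)=\tfrac12$, $\mathbb{F}_n(\hat\mu)=\tfrac12$, and the Mar\v{c}henko--Pastur density is bounded below by some $\rho_\gamma>0$ near $\mu_\gamma$ (as $\mu_\gamma\in(\gamma_-,\gamma_+)$), on the event that $\hat\mu$ lies in the bulk one gets $|\hat\mu-\mu_\gamma|\le \rho_\gamma^{-1}|\mathbb{F}_\gamma(\hat\mu)-\mathbb{F}_n(\hat\mu)|\le \rho_\gamma^{-1}\|\mathbb{F}_n-\mathbb{F}_\gamma\|_\infty$. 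Thus $\ee[(\hat\mu-\mu_\gamma)^2]$ is controlled by $\ee\|\mathbb{F}_n-\mathbb{F}_\gamma\|_\infty^2$, up to the negligible contribution of the rare event that the median eigenvalue leaves the bulk.

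I expect the crux of the argument to be the final step: bounding $\ee\|\mathbb{F}_n-\mathbb{F}_\gamma\|_\infty^2$ by a \emph{quantitative} Mar\v{c}henko--Pastur convergence rate that holds for general sub-Gaussian entries (a Kolmogorov-distance estimate of order $n^{-(1-\epsilon)/2}$, giving $\ee\|\mathbb{F}_n-\mathbb{F}_\gamma\|_\infty^2\lesssim C_{\gamma,\epsilon}\,n^{-(1-\epsilon)}$ and hence the $n^{-(1-\epsilon)}$ term), and transferring it to the median through the density lower bound — crucially \emph{without} the orthogonal-invariance assumption used by Donoho and Gavish. A Lipschitz/sub-Gaussian concentration bound for the single median eigenvalue (the map $A\mapsto\lambda_k(\tfrac1n AA^T)$ being Lipschitz in the entries) then supplies the $n^{-1}$ term. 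Controlling the rare event that the median eigenvalue escapes the bulk, so that the density lower bound is applicable, is a secondary technical point that can be handled by standard edge/no-outlier estimates for sub-Gaussian sample covariance matrices.
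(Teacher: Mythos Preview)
Your initial decomposition is exactly the one in the paper (since $\sigma A=X-M$, inserting $\sigma\,\text{med}(\lambda_i(A))$ is the same as inserting $\lambda_{m/2}(X-M)$), and your treatment of the noise term via a quantitative Mar\v{c}henko--Pastur rate (G\"otze--Tikhomirov, Kolmogorov distance $O(n^{-1/2+\epsilon})$) transferred to the median through a bulk density lower bound is essentially the paper's Lemma~4.2 argument. That part is fine.

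The gap is in the signal term. Mirsky/Hoffman--Wielandt gives only
\[
\big(\lambda_{m/2}(X)-\lambda_{m/2}(\sigma A)\big)^2\;\le\;\|M\|_F^2\;\le\;\|M\|_*^2,
\]
which after dividing by $n\mu_\gamma$ yields $2\|M\|_*^2/(n\mu_\gamma)$, not the claimed $2\|M\|_*^2/(n^2\mu_\gamma)$. You are missing a full factor of $n$, and this is not ``book-keeping'': no amount of normalization recovers it from Mirsky alone, because Mirsky does not distinguish the median index from any other. The intuition you state (``$M$ disturbs chiefly the largest singular values while the median sits in the bulk'') is correct, but it has to be implemented, and that implementation is the core of the paper's signal-term argument.

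The paper's device is a Weyl-type interlacing inequality with an index shift. From $\lambda_{i+j-1}(A_1+A_2)\le\lambda_i(A_1)+\lambda_j(A_2)$ one gets, for any $1\le k\le\lfloor\sqrt{n}\rfloor$,
\[
\lambda_{m/2+k}(X-M)-\lambda_{k+1}(M)\;\le\;\lambda_{m/2}(X)\;\le\;\lambda_{m/2-k}(X-M)+\lambda_{k+1}(M).
\]
Since $\lambda_{k+1}(M)\le \|M\|_*/(k+1)$, choosing $k=\lfloor\sqrt{n}\rfloor$ makes the $M$-contribution of order $\|M\|_*/\sqrt{n}$, whose square divided by $n\mu_\gamma$ gives exactly $\|M\|_*^2/(n^2\mu_\gamma)$. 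The price is that $\lambda_{m/2}(X)$ is now compared to $\lambda_{m/2\pm k}(X-M)$ rather than $\lambda_{m/2}(X-M)$; this ``sliding'' within the noise spectrum by $k/n\le n^{-1/2}$ is controlled again via Lemma~4.2 and a mean-value bound on the Mar\v{c}henko--Pastur quantile function, producing the $\sigma^2 C_\gamma/(\mu_\gamma n)$ term. So in the paper the $n^{-1}$ term does not come from a separate Lipschitz concentration of the median eigenvalue as you suggest, but from this deterministic quantile shift; your concentration idea is plausible but unnecessary once the shift argument is in place.
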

If $\|M\|_* = o(n)$, Theorem~\ref{MSESigmathm} implies that the $\mse(\hat{\sigma})$ 

In \cite{Nadler} Kritchman and Nadler suggested an iterative method for estimating the $\sigma$. The strength of $\hat{\sigma}$ for us is that it's easy to compute. Also its simple definition made it possible to use random matrix theory to give the upper bound on its $\mse$.

\subsection{Modified USVT estimator of $M$} 
With $\hat{\sigma}$ as the estimator of $\sigma$, we use the following modified version of the USVT algorithm to find an estimator for $M$.\\

{\centering
\begin{enumerate}
\item Let $\sum_{i=1}^m \lambda_i(X) u_i v_i^T$  be the singular value decomposition of $X$, where $\lambda_1(X)\geq\ldots\geq\lambda_m(X)$ are the singular values of $X$ and $u_i$ and $v_i$ are the left and right singular vectors.

\item Choose a small positive number $\eta\in (0,1]$ and let $S$ be the set of ``thresholded singular values" 
\[
S_{\hat{\sigma}} = \{i: \lambda_i(X) \geq (2+\eta)\hat{\sigma}\sqrt{n}\}.
\]

\item Define $\hat{M} = \sum_{i\in S_{\hat{\sigma}}} \lambda_i(X) u_i v_i^T$.
\end{enumerate}
}

Theorem~\ref{MSEMthm} gives an upper bound on the $\mse$ of $\hat{M}$. 
\begin{theorem}\label{MSEMthm}
Let $X = [x_{ij}]$ be a $m\times n$ random matrix with $x_{ij}$'s distributed i.i.d. from a sub-Gaussian distribution such that $\ee(x_{ij}) = m_{ij}$ and $Var(x_{ij}) = \sigma^2$ for unknown values of $\sigma$ and $m_{ij}$'s. Let 
\begin{eqnarray*}
\hat{\sigma} = \frac{\emph{\text{med}}(\lambda_i(X))}{\sqrt{n\mu_\gamma}},
\end{eqnarray*}
and for $\eta\in(0, 1)$ define $\hat{M}$ using the modified USVT algorithm. Then
\begin{eqnarray*}
\lefteqn{\emph{\mse}(\hat{M}) = \ee\big[\frac{1}{mn}\|\hat{M} - M\|_F^2\big]} \\ 
&\leq &  \frac{C_0\sigma\|M\|_*}{\gamma n\sqrt{n}} + (2 + \eta)^2\sigma^2\left(C_1 + C_2\frac{\|M\|_*^4}{\sigma^4\mu_\gamma^2 n^4}\right)^{1/2}\max\{\sqrt{\frac{C_{\epsilon, \gamma}\sigma}{\mu_\gamma\eta^2 n^{1 - \epsilon}}}, \sqrt{\frac{\|M\|_*^2}{n^2\sigma^2\eta^2\mu_\gamma}}\},
\end{eqnarray*}
where $C_0, C_1$, and $C_2$ are constants, and $C_{\epsilon, \gamma}$ is a constant that depends on $\epsilon$ and $\gamma$. 
\end{theorem}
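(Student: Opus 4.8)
The plan is to compare the modified estimator $\hat M$ against the \emph{oracle} USVT estimator $\hat M_o := \sum_{i\in S_\sigma}\lambda_i(X)u_iv_i^T$ that thresholds at the true level $(2+\eta)\sigma\sqrt n$, and to control the two resulting pieces separately. Writing $\hat M - M = (\hat M_o - M) + (\hat M - \hat M_o)$ and using $\|a+b\|_F^2\le 2\|a\|_F^2 + 2\|b\|_F^2$, it suffices to bound $\ee\|\hat M_o - M\|_F^2$ and $\ee\|\hat M - \hat M_o\|_F^2$: the first will produce the term $\frac{C_0\sigma\|M\|_*}{\gamma n\sqrt n}$ and the second the remaining term. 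Throughout I set $W := X - M = \sigma A$ and use that, for sub-Gaussian i.i.d. entries, the operator norm satisfies $\|W\|_{\mathrm{op}} = \sigma\lambda_1(A) \le (2+\tfrac\eta2)\sigma\sqrt n$ with overwhelming probability and in every fixed moment (the standard $\sqrt m + \sqrt n$ bound, which replaces Chatterjee's boundedness hypothesis). The atypical events on which $\|W\|_{\mathrm{op}}$ is large, or on which $\hat\sigma$ is far from $\sigma$, are absorbed by a crude deterministic bound times a sub-Gaussian tail and contribute only lower-order terms.

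For the oracle piece I would run Chatterjee's deterministic USVT argument. On the event $\{\|W\|_{\mathrm{op}}\le(2+\tfrac\eta2)\sigma\sqrt n\}$, Weyl's inequality forces every retained index $i\in S_\sigma$ to satisfy $\lambda_i(M)\ge \lambda_i(X)-\|W\|_{\mathrm{op}}\ge \tfrac\eta2\sigma\sqrt n$, so $|S_\sigma|\le \tfrac{2\|M\|_*}{\eta\sigma\sqrt n}$, and the truncation error splits into a retained part controlled by $|S_\sigma|\,\|W\|_{\mathrm{op}}^2$ and a discarded part controlled by $\sum_{i\notin S_\sigma}\lambda_i(M)^2$. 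Both are of order $\sigma\sqrt n\,\|M\|_*$, and dividing by $mn=\gamma n^2$ yields the first term $\frac{C_0\sigma\|M\|_*}{\gamma n\sqrt n}$.

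The new content is the difference $\hat M - \hat M_o$. Since the two estimators keep the same singular vectors and differ only over the symmetric difference of their index sets, orthonormality of the $u_iv_i^T$ gives $\|\hat M - \hat M_o\|_F^2 = \sum_{i\in S_{\hat\sigma}\triangle S_\sigma}\lambda_i(X)^2$. Each such index has $\lambda_i(X)$ trapped between the two thresholds, so $\lambda_i(X)^2\le (2+\eta)^2\max(\sigma,\hat\sigma)^2 n$, whence
\[
\frac{1}{mn}\|\hat M - \hat M_o\|_F^2 \le (2+\eta)^2\max(\sigma,\hat\sigma)^2\,\frac{|S_{\hat\sigma}\triangle S_\sigma|}{m}.
\]
Passing to $Y_n = \tfrac1n XX^T$, whose eigenvalues are $\lambda_i(X)^2/n$, the two thresholds become $(2+\eta)^2\sigma^2$ and $(2+\eta)^2\hat\sigma^2$, so $\frac{|S_{\hat\sigma}\triangle S_\sigma|}{m} = |\mathbb F_n((2+\eta)^2\sigma^2) - \mathbb F_n((2+\eta)^2\hat\sigma^2)| =: \Delta$. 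Applying Cauchy--Schwarz to the expectation of the last display and a fourth-moment bound $\ee[\max(\sigma,\hat\sigma)^4]\le \sigma^4\big(C_1 + C_2\tfrac{\|M\|_*^4}{\sigma^4\mu_\gamma^2 n^4}\big)$ — got by promoting the second-moment analysis behind Theorem~\ref{MSESigmathm} to a fourth moment, which sub-Gaussian concentration of the median singular value permits — reduces everything to estimating $\ee[\Delta^2]$, and explains the factor $\big(C_1 + C_2\tfrac{\|M\|_*^4}{\sigma^4\mu_\gamma^2 n^4}\big)^{1/2}$.

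The heart of the argument, and the step I expect to be the main obstacle, is bounding $\ee[\Delta^2]$. The key geometric fact is that the threshold $(2+\eta)^2\sigma^2 > 4\sigma^2 \ge \sigma^2\gamma_+$ lies strictly above the upper edge of the rescaled Mar\v{c}henko--Pastur support, so the population cdf $\mathbb F_\gamma(\cdot/\sigma^2)$ is flat ($\equiv 1$) across the whole window and contributes nothing; the entire increment $\Delta$ is thus a deviation of $\mathbb F_n$ from its limit. I would split this into a pure-noise fluctuation, controlled by a non-asymptotic Mar\v{c}henko--Pastur rate $\sup_x|\mathbb F_n(x)-\mathbb F_\gamma(x/\sigma^2)| \lesssim n^{-(1-\epsilon)/2}$ of the kind already used for Theorem~\ref{MSESigmathm}, and a signal-induced shift, controlled by interlacing: perturbing $\sigma A$ by $M$ moves at most $\mathrm{rank}(M)$ eigenvalues across any level and shifts the counting measure above the edge by $O(\|M\|_*/(\eta\sigma\sqrt n\,m))$. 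Squaring and taking expectations, these two mechanisms produce exactly the two entries of the maximum, $\frac{C_{\epsilon,\gamma}\sigma}{\mu_\gamma\eta^2 n^{1-\epsilon}}$ and $\frac{\|M\|_*^2}{n^2\sigma^2\eta^2\mu_\gamma}$. The delicate point is that the right endpoint $(2+\eta)^2\hat\sigma^2$ is itself random and correlated with the spectrum of $Y_n$; I would decouple the two either by conditioning on $\hat\sigma$ and invoking a uniform-in-$x$ deviation bound, or by first absorbing the randomness of $\hat\sigma$ through Theorem~\ref{MSESigmathm} and only then applying the Mar\v{c}henko--Pastur rate, taking care that $\hat\sigma$ stays above $\tfrac{\sqrt{\gamma_+}}{2+\eta}\,\sigma$ so the window provably remains above the bulk. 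Assembling the oracle and difference bounds then gives the stated inequality.
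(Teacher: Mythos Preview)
Your route is genuinely different from the paper's. You decompose $\hat M-M$ through the oracle $\hat M_o$ and then must control the spectral window $S_{\hat\sigma}\triangle S_\sigma$; the paper never introduces $\hat M_o$. Instead it works on the single good event $E_1\cap E_2=\{\|X-M\|\le(2+\tfrac{\eta}{2})\sigma\sqrt n\}\cap\{|\hat\sigma-\sigma|\le\tfrac{\eta}{20}\sigma\}$ and observes that there the \emph{random} threshold $(2+\eta)\hat\sigma\sqrt n$ is itself of the form $(1+\delta)\|X-M\|$ with $\delta\ge\eta/5$, so Chatterjee's deterministic lemma applies \emph{directly to} $\hat M$ and yields $\|\hat M-M\|_F^2\le C\sigma\sqrt n\,\|M\|_*$. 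The complement is handled by Cauchy--Schwarz, pairing a crude fourth-moment bound $\ee\|\hat M-M\|_F^4\le C n^4(2+\eta)^4\ee[\hat\sigma^4]+C\sigma^4 n^4$ (using only $\|\hat M-X\|_F^2\le m(2+\eta)^2\hat\sigma^2 n$) with $\pp(E_2^c)$ obtained from Theorem~\ref{MSESigmathm} via Chebyshev. No counting of eigenvalues in a window, no Mar\v{c}henko--Pastur analysis at the edge, no decoupling of $\hat\sigma$ from the spectrum is needed.

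Your approach is more granular and could in principle give sharper constants, but the step you flag as delicate --- keeping the random endpoint $(2+\eta)^2\hat\sigma^2$ above the bulk so that $\Delta$ is a pure edge fluctuation --- essentially forces you to condition on an event like $E_2$ anyway; once you do that, the oracle comparison becomes redundant because Chatterjee's lemma already covers $\hat M$ on $E_1\cap E_2$. In short, the paper short-circuits your difference term by noting that a threshold within a fixed multiplicative window of the true one is still a valid USVT threshold, and pushes all the randomness of $\hat\sigma$ into the probability of $E_2^c$ and into $\ee[\hat\sigma^4]$ (their Lemma~\ref{fourthMomentSigmaHat}, which is the fourth-moment analogue you anticipated).
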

The upper bounds depend on the $\|M\|_*$ and $\sigma$, the unknown parameters of the model, and the parameter of choice $\eta$. Theorem~\ref{MSEMthm} implies that if $\|M\|_* = o(n)$ then $\emph{\mse}(\hat{M}) $ is of order of $\sigma^2$. 
\section{Simulation} \label{Simulation}
In this section we consider a simple simulated example. Let $n = 1000$ and $m = 200$ and we consider the sequence $\lambda_1\geq\cdots\geq\lambda_{200}$ where $\lambda_i = \exp(3 - i/50)$ for $i\in\{1, \cdots, 200\}$. Then for each $r\in\{50, 100, 150, 200\}$ we define the following signal matrix
\begin{eqnarray*}
M_r = UD_rV^T
\end{eqnarray*}
where $D_r$ is a $m\times n$ rectangular diagonal matrix with the $i$-th diagonal entry equal to $\lambda_i \mathds{1}\{i \leq r\}$, and $U$ and $V$ are randomly uniform orthogonal matrices of size $m\times m$ and $n\times n$ respectively. For each choice of $r\in\{50, 100, 150, 200\}$, we generated 100, independent noise matrix $A$ with $N(0, 1)$ i.i.d. entries and considered observed matrix $X = M_r + \sigma A$ for different values of $\sigma$. In Figure~\ref{mseCombined} we have plotted the mean squared error of $\hat{\sigma}$ and $\hat{M_r}$ for different values of $r$ and $\sigma$. We have set $\eta = 0.02$ in the USVT algorithm.

\begin{figure}
  \centering
    \includegraphics[width=1\textwidth]{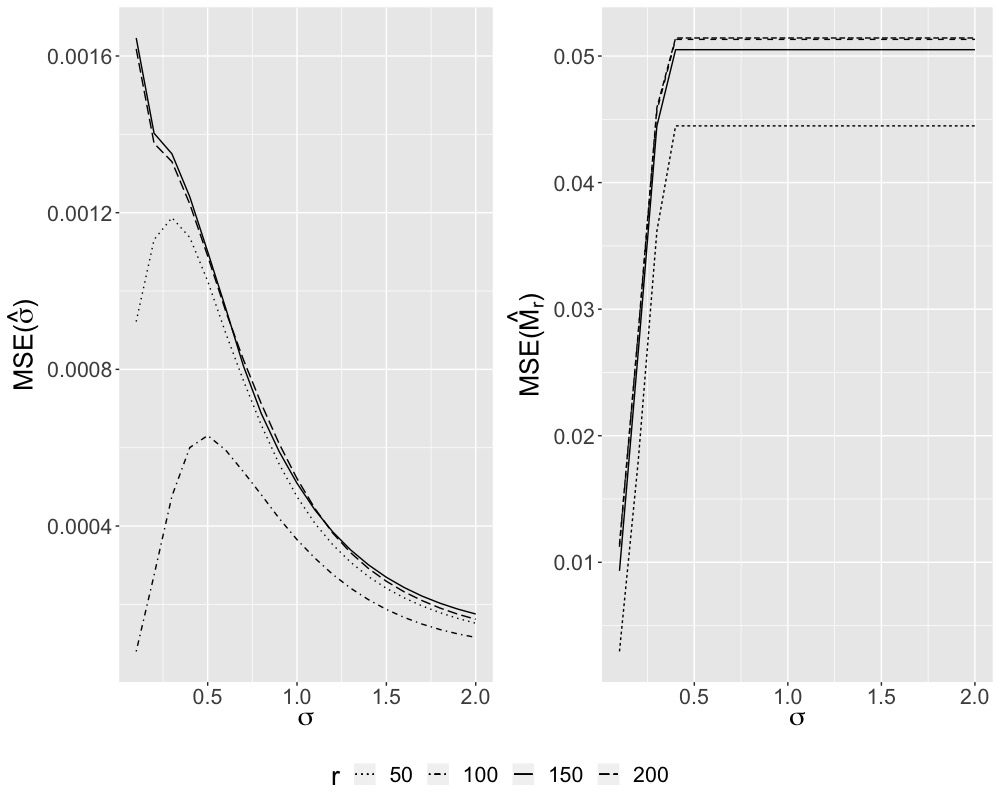}
    \caption{Estimated mean squared error of $\hat{\sigma}$ and $\hat{M}$ for different values of rank, $r = 50, 100, 150$, and $200$.  The our error bounds in Theorems~\ref{MSESigmathm} and~\ref{MSEMthm} do not depend on the rank, but it depends on the nuclear norm of the mean matrix $M_r$. In this example we have $\|M_{50}\|_* = 641.193$, $\|M_{100}\|_* = 877.0753$, $\|M_{150}\|_* = 963.851$, and $\|M_{200}\|_* = 995.775$.}\label{mseCombined}
\end{figure}

\section{Proofs} \label{Proofs}
In what follows $c$, $C$ and $C_i$ for $i\in\mathbb{N}$ are non-negative constants independent of the parameters of the problem, and $C_\gamma$ and $C_{\epsilon, \gamma}$ are non-negative constants that only depends on $\epsilon$ and pair of $\epsilon$ and $\gamma$ respectively. For simplicity these values may change from line to line or even in a line. For simplicity in notation and without loss of generality we assume that $m$ is an even number and therefore use $\lambda_{m/2}$ instead of $\text{med}(\lambda_i)$ for the median singular value. 
\subsection{Proofs of Theorems~\ref{MSESigmathm} and \ref{MSEMthm}} \label{ProofMain}
\subsubsection{Proof of Theorem~\ref{MSESigmathm}}
\begin{proof}
By adding and subtracting $\lambda_{m/2}(X - M)$ and inequality $(a + b)^2\leq 2a^2 + 2b^2$ we have
\begin{eqnarray}
\ee\Big[\Big(\hat{\sigma} - \sigma\Big)^2\Big] & = & \ee\Big[\Big(\frac{\lambda_{m/2}(X)}{\sqrt{n\mu_\gamma}} - \sigma\Big)^2\Big] \nonumber\\
&\leq & \frac{2}{n\mu_\gamma}\ee\Big[\Big(\lambda_{m/2}(X) - \lambda_{m/2}(X - M)\Big)^2\Big]  \label{distLambda}\\ 
&+& \frac{2\sigma^2}{n\mu_\gamma}\ee\Big[\Big(\lambda_{m/2}(\frac{X - M}{\sigma}) - \sqrt{n\mu_\gamma}\Big)^2\Big]. \label{distLambdaMean}
\end{eqnarray}

Using Lemma~\ref{diffFsqrt} for~\eqref{distLambdaMean} we have,
\begin{eqnarray}
\frac{2\sigma^2}{n\mu_\gamma}\ee\Big[\Big(\lambda_{m/2}(\frac{X - M}{\sigma}) - \sqrt{n\mu_\gamma}\Big)^2\Big] & = & \frac{2\sigma^2}{\mu_\gamma}\ee\Big[\Big(\lambda_{m/2}(\frac{A}{\sqrt{n}}) - \sqrt{\mu_\gamma}\Big)^2\Big]\nonumber\\
& = & \frac{2\sigma^2}{\mu_\gamma}\ee\Big[\Big( \sqrt{\mathbb{F}_n^{-1}(\frac{1}{2})}- \sqrt{\mathbb{F}_\gamma^{-1}(\frac{1}{2})}\Big)^2\Big] \nonumber\\
& \leq &  \frac{\sigma^2 C_{\epsilon,\gamma}}{\mu_\gamma n^{1 - \epsilon}}. 
\end{eqnarray}

To give an upper bound on~\eqref{distLambda}, we use the following inequality from \cite{Bhatia97} (page 75). For any two $n\times n$ matrices $A_1$ and $A_2$ and any two indices $i$ and $j$ such that $i + j\leq n + 1$, 
\begin{eqnarray}\label{bhatia}
\lambda_{i + j - 1}(A_1 + A_2) \leq \lambda_i(A_1) + \lambda_j(A_2).
\end{eqnarray}

For matrices $X - M$ and $M$, $0 < k \leq \lfloor\sqrt{n}\rfloor$, and indices $m/2 - k$ and $k + 1$,~\eqref{bhatia} gives
\begin{eqnarray}
\lambda_{m/2}(X) & \leq & \lambda_{m/2 - k}(X - M) + \lambda_{k+1}(M), \label{first}
\end{eqnarray}
and for matrices $X$ and $M$, and indices $m/2$ and $k + 1$,~\eqref{bhatia} gives
\begin{eqnarray}
\lambda_{m/2 + k}(X - M) & \leq & \lambda_{m/2}(X) + \lambda_{k+1}(M).\label{second}
\end{eqnarray}

Subtracting $\lambda_{m/2}(X - M)$ from the left and right sides of~\eqref{first} and~\eqref{second} gives
\begin{eqnarray*}
\lambda_{m/2}(X) - \lambda_{m/2}(X - M) & \leq & \lambda_{m/2 - k}(X - M) - \lambda_{m/2}(X - M) + \lambda_{k+1}(M),\\ 
\lambda_{m/2}(X) -  \lambda_{m/2}(X - M) &\geq & \lambda_{m/2+k}(X - M) -  \lambda_{m/2}(X - M) - \lambda_{k+1}(M).
\end{eqnarray*}
Therefore 
\begin{eqnarray}\label{boundDiffLa}
\lefteqn{(\lambda_{m/2}(X) - \lambda_{m/2}(X - M))^2} \nonumber\\
&\leq & 2\max\{(\lambda_{m/2 - k}(X - M) - \lambda_{m/2}(X - M))^2, (\lambda_{m/2 + k}(X - M) - \lambda_{m/2}(X - M))^2\} \nonumber\\
&+&2\lambda_{k+1}(M)^2
\end{eqnarray}
Note that
\begin{eqnarray}\label{upLambdaK}
\lambda_{k+1}(M) \leq \frac{\|M\|_*}{k+1}.
\end{eqnarray}
Taking expectation of~\eqref{boundDiffLa} and inequality~\eqref{upLambdaK} gives
\begin{eqnarray}
\lefteqn{\ee[(\lambda_{m/2}(X) - \lambda_{m/2}(X - M))^2]} \nonumber\\
&\leq & 2\ee[(\lambda_{m/2 - k}(X - M) - \lambda_{m/2}(X - M))^2] \label{minusK}\\
&+& 2\ee[(\lambda_{m/2 + k}(X - M) - \lambda_{m/2}(X - M))^2] \label{plusK}\\
&+& 2(\frac{\|M\|_*}{k+1})^2. \nonumber
\end{eqnarray}
%%%%%%%%%%%
To give an upper bound on~\eqref{minusK} we use the following decomposition

\begin{eqnarray}
\lambda_{m/2 - k}(X - M) - \lambda_{m/2}(X - M) &=& \sigma\sqrt{n}(\sqrt{\mathbb{F}_n^{-1}(\frac{1}{2} - \frac{k}{n})} - \sqrt{\mathbb{F}_n^{-1}(\frac{1}{2})}) \nonumber\\
&=& \sigma\sqrt{n}\left[\sqrt{\mathbb{F}_n^{-1}(\frac{1}{2} - \frac{k}{n})} - \sqrt{\mathbb{F}_\gamma^{-1}(\frac{1}{2} - \frac{k}{n})}\right] +  \nonumber\\
&& \sigma\sqrt{n}\left[\sqrt{\mathbb{F}_\gamma^{-1}(\frac{1}{2})} - \sqrt{\mathbb{F}_n^{-1}(\frac{1}{2})}\right] +  \nonumber\\
&& \sigma\sqrt{n}\left[\sqrt{\mathbb{F}_\gamma^{-1}(\frac{1}{2} - \frac{k}{n})} - \sqrt{\mathbb{F}_\gamma^{-1}(\frac{1}{2})}\right]. 
\label{usefulDecomposition}
\end{eqnarray}

Using inequality $(a + b)^2 \leq 2a^2 + 2b^2$, and Lemma~\ref{diffFsqrt} we have
\begin{eqnarray}
\lefteqn{\ee[(\lambda_{m/2 - k}(X - M) - \lambda_{m/2}(X - M))^2] \leq }\nonumber\\
& & 4\sigma^2 n\ee\left[\left(\sqrt{\mathbb{F}_n^{-1}(\frac{1}{2} - \frac{k}{n})} - \sqrt{\mathbb{F}_\gamma^{-1}(\frac{1}{2} - \frac{k}{n})}\right)^2\right] + \nonumber\\
&& 4\sigma^2 n\ee\left[\left(\sqrt{\mathbb{F}_\gamma^{-1}(\frac{1}{2})} - \sqrt{\mathbb{F}_n^{-1}(\frac{1}{2})}\right)^2\right] + \nonumber\\
&& 2\sigma^2 n\left[\sqrt{\mathbb{F}_\gamma^{-1}(\frac{1}{2} - \frac{k}{n})} - \sqrt{\mathbb{F}_\gamma^{-1}(\frac{1}{2})}\right]^2  \nonumber\\
&& \leq 2\sigma^2 C_{\epsilon,\gamma} n^\epsilon + 2\sigma^2 n\left[\sqrt{\mathbb{F}_\gamma^{-1}(\frac{1}{2} - \frac{k}{n})} - \sqrt{\mathbb{F}_\gamma^{-1}(\frac{1}{2})}\right]^2. \label{diff1}
\end{eqnarray}
Similary
\begin{eqnarray}
\lefteqn{\ee[(\lambda_{m/2 + k}(X - M) - \lambda_{m/2}(X - M))^2] \leq }\nonumber\\
& & 4\sigma^2 n\ee\left[\left(\sqrt{\mathbb{F}_n^{-1}(\frac{1}{2} + \frac{k}{n})} - \sqrt{\mathbb{F}_\gamma^{-1}(\frac{1}{2} + \frac{k}{n})}\right)^2\right] + \nonumber\\
&& 4\sigma^2 n\ee\left[\left(\sqrt{\mathbb{F}_\gamma^{-1}(\frac{1}{2})} - \sqrt{\mathbb{F}_n^{-1}(\frac{1}{2})}\right)^2\right] + \nonumber\\
&& 2\sigma^2 n\left[\sqrt{\mathbb{F}_\gamma^{-1}(\frac{1}{2} + \frac{k}{n})} - \sqrt{\mathbb{F}_\gamma^{-1}(\frac{1}{2})}\right]^2  \nonumber\\
&& \leq 2\sigma^2 C_{\epsilon,\gamma} n^\epsilon + 2\sigma^2 n\left[\sqrt{\mathbb{F}_\gamma^{-1}(\frac{1}{2} + \frac{k}{n})} - \sqrt{\mathbb{F}_\gamma^{-1}(\frac{1}{2})}\right]^2.\label{diff2}
\end{eqnarray}

Using the mean value theorem, we find the following bound on~\eqref{diff1}, and~\eqref{diff2}.
\begin{eqnarray}\label{boundFlambda}
|\sqrt{\mathbb{F}_\gamma^{-1}(\frac{1}{2} \pm \frac{k}{n})} - \sqrt{\mathbb{F}_\gamma^{-1}(\frac{1}{2})}| &\leq & C_\gamma|\mathbb{F}_\gamma^{-1}(\frac{1}{2} \pm \frac{k}{n}) - \mathbb{F}_\gamma^{-1}(\frac{1}{2})|\nonumber \\
&\leq & \frac{C_\gamma}{\sqrt{n}}.
\end{eqnarray}

Putting~\eqref{diff1}, and~\eqref{diff2} together with~\eqref{boundFlambda} gives
\begin{eqnarray}\label{last1}
\ee[(\lambda_{m/2}(X) - \lambda_{m/2}(X - M))^2] \leq \sigma^2 C_{\epsilon,\gamma} n^\epsilon + \sigma^2 C_\gamma.
\end{eqnarray}

Inequalities~\eqref{last1}, and~\eqref{upLambdaK} with $k = \lfloor\sqrt{n}\rfloor$ completes the proof,
\begin{eqnarray*}
\ee\Big[(\hat{\sigma} - \sigma)^2\Big] &\leq & \frac{2\|M\|_*^2}{\mu_\gamma n^2} + \frac{\sigma^2 C_\gamma}{\mu_\gamma n} +  \frac{\sigma^2 C_{\gamma, \epsilon}}{\mu_\gamma n^{1 - \epsilon}}.
\end{eqnarray*}

\end{proof}

\subsection{Proof of Theorem~\ref{MSEMthm}}
\begin{proof}
Consider the following two sets $E_1$ and $E_2$,
\begin{eqnarray*}
E_1 &=& \{\|X - M\| \leq (2 + \frac{\eta}{2})\sigma\sqrt{n}\}, \\
E_2 &=& \{|\hat{\sigma} - \sigma|\leq\frac{\eta}{20}\sigma \}.
\end{eqnarray*}
Consider the following decomposition,
\begin{eqnarray*}
\ee\|\hat{M} - M\|_F^2 = \ee[\|\hat{M} - M\|_F^2\mathds{1}\{(E_1\cap E_2)^\mathsf{c}\}] + \ee[\|\hat{M} - M\|_F^2\mathds{1}\{(E_1\cap E_2)\}].
\end{eqnarray*}
Using Lemma~\ref{lemmaChatterjee} from \cite{Chatterjee}, which we copy without its proof, we find an upper bound on $\ee[\|\hat{M} - M\|_F^2\mathds{1}\{(E_1\cap E_2)\}]$. 
\begin{lemma}\label{lemmaChatterjee}
Let $D = \sum_{i=1}^m\sigma_i u_i v_i^T$ be the singular value decomposition of $D$. Fix any $\delta > 0$ and define 
\begin{eqnarray*}
\hat{B} \coloneqq \sum_{i : \sigma_i > (1+\delta)\|D - B\|}\sigma_i x_i y_i^T.
\end{eqnarray*} 
Then
\begin{eqnarray*}
\|\hat{B} - B\|_F \leq K(\delta)(\|D - B\|\|B\|_*)^{1/2},
\end{eqnarray*}
where $K(\delta) = (4 + 2\delta)\sqrt{2/\delta} + \sqrt{2 + \delta}$.
\end{lemma}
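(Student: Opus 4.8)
The plan is to control the error in two different matrix norms and then combine them through the elementary inequality $\|R\|_F^2 \le \|R\|\,\|R\|_*$, valid for every matrix $R$ (indeed $\|R\|_F^2 = \sum_i \sigma_i(R)^2 \le (\max_i \sigma_i(R))\sum_i\sigma_i(R) = \|R\|\,\|R\|_*$). Write $E := D - B$, let $\|\cdot\|$ denote the operator norm, and set the threshold $\tau := (1+\delta)\|E\|$, so that $\hat B$ is the truncated SVD of $D$ retaining exactly those singular directions with $\sigma_i > \tau$. The one external tool I would use throughout is Weyl's inequality for singular values, $|\sigma_i(D) - \sigma_i(B)| \le \|D - B\| = \|E\|$ for every $i$.

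First I would bound the operator norm of the error. Since $\hat B - D = -\sum_{i:\sigma_i\le\tau}\sigma_i u_i v_i^T$ retains only the discarded directions, it is already in singular value form and its largest singular value is at most $\tau$; hence $\|\hat B - D\| \le \tau = (1+\delta)\|E\|$. The triangle inequality then gives $\|\hat B - B\| \le \|\hat B - D\| + \|E\| \le (2+\delta)\|E\|$. Next I would bound the nuclear norm of the error. The crucial observation is that each retained index $i$, i.e.\ one with $\sigma_i(D) > (1+\delta)\|E\|$, forces $\sigma_i(B) \ge \sigma_i(D) - \|E\| > \delta\|E\|$, so that $\|E\| < \sigma_i(B)/\delta$ and therefore $\sigma_i(D) \le \sigma_i(B) + \|E\| \le (1 + 1/\delta)\sigma_i(B)$. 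Summing over the retained indices gives $\|\hat B\|_* = \sum_{i:\sigma_i>\tau}\sigma_i(D) \le (1+1/\delta)\sum_i\sigma_i(B) = (1+1/\delta)\|B\|_*$, and hence $\|\hat B - B\|_* \le \|\hat B\|_* + \|B\|_* \le (2 + 1/\delta)\|B\|_*$.

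Combining the two bounds via $\|\hat B - B\|_F^2 \le \|\hat B - B\|\,\|\hat B - B\|_*$ then yields $\|\hat B - B\|_F^2 \le (2+\delta)(2+1/\delta)\,\|E\|\,\|B\|_*$, which is exactly the claimed inequality with $K(\delta) = \sqrt{(2+\delta)(2+1/\delta)}$. I expect the main difficulty to be bookkeeping rather than conceptual: one must keep the indices of $\sigma_i(D)$ and $\sigma_i(B)$ aligned when invoking Weyl's inequality, and separately verify the degenerate case $\|E\| = 0$ (where $\tau = 0$, $\hat B = B$, and both sides vanish). Matching the precise constant $K(\delta) = (4+2\delta)\sqrt{2/\delta} + \sqrt{2+\delta}$ recorded in the statement would require a less efficient splitting of $\hat B - B$ into a retained and a discarded block; the two-norm argument above already certifies the inequality with a smaller constant, which is all that is needed in the subsequent application.
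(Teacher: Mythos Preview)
The paper does not prove this lemma at all: it is quoted from \cite{Chatterjee} and the text explicitly says ``which we copy without its proof.'' Your argument is correct and in fact establishes the inequality with the sharper constant $K(\delta)=\sqrt{(2+\delta)(2+1/\delta)}$, which is strictly smaller than the stated $(4+2\delta)\sqrt{2/\delta}+\sqrt{2+\delta}$ for every $\delta>0$; since the lemma only asserts an upper bound, your version implies it.

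For comparison, Chatterjee's original proof takes exactly the less direct route you anticipate at the end. He inserts the best rank-$r$ approximation $B_r$ of $B$, where $r$ is the number of retained indices, and bounds $\|\hat B-B\|_F\le\|\hat B-B_r\|_F+\|B_r-B\|_F$. The tail piece satisfies $\|B_r-B\|_F^2=\sum_{i>r}\sigma_i(B)^2\le(2+\delta)\|E\|\,\|B\|_*$ because $\sigma_i(B)\le\sigma_i(D)+\|E\|\le(2+\delta)\|E\|$ for $i>r$; this produces the $\sqrt{2+\delta}$ term. The head piece has rank at most $2r$, so $\|\hat B-B_r\|_F\le\sqrt{2r}\,\|\hat B-B_r\|$; combining the rank bound $r\le\|B\|_*/(\delta\|E\|)$ (from $\sigma_i(B)>\delta\|E\|$ for retained $i$, the same observation you use) with the operator-norm estimate $\|\hat B-B_r\|\le(4+2\delta)\|E\|$ yields the $(4+2\delta)\sqrt{2/\delta}$ term. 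Your single application of $\|R\|_F^2\le\|R\|\,\|R\|_*$ to $R=\hat B-B$ replaces this two-block rank argument and avoids the lossy step $\|\hat B-B_r\|_F\le\sqrt{2r}\,\|\hat B-B_r\|$, which is why your constant is smaller. Either approach suffices for the downstream application in Theorem~\ref{MSEMthm}, where only the qualitative dependence $K(\delta)\le C\sqrt{1+\delta}$ for $\delta\ge\eta/5$ is used.
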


For $\delta \geq \eta/5$, we have $K(\delta)\leq C\sqrt{1+\delta}$. On the set $E_1\bigcap E_2$, we have $\delta \geq \eta/5$. Thus by using Lemma~\ref{lemmaChatterjee} we have,
\begin{eqnarray*}
\|\hat{M} - M\|_F^2 &\leq & C(1 + \delta)\|X - M\|\|M\|_* \\
&\leq & C\sqrt{n}\sigma\|M\|_*.
\end{eqnarray*}
Therefore
\begin{eqnarray*}\label{intersection}
 \ee[\|\hat{M} - M\|_F^2 \mathds{1}\{(E_1\cap E_2)\}] \leq C\sqrt{n}\sigma\|M\|_*.
\end{eqnarray*}

Then by Cauchy-Schwartz inequality we get
\begin{eqnarray}
\ee[\|\hat{M} - M\|_F^2\mathds{1}\{(E_1\cap E_2)^\mathsf{c}\}] \leq \sqrt{\ee[\|\hat{M} - M\|_F^4]\ee[(E_1\cap E_2)^\mathsf{c}]}. \label{eq:secondTerm}
\end{eqnarray}
Note that
\begin{eqnarray*}
\mathbb{P}[(E_1\cap E_2)^\mathsf{c}] \leq \mathbb{P}(E_1^\mathsf{c}) +  \mathbb{P}(E_2^\mathsf{c}).
\end{eqnarray*}
By Proposition 2.4 in \cite{Rudelson-Vershynin} 
\begin{eqnarray}
\mathbb{P}(E_1^\mathsf{c}) \leq Ce^{-c\eta^2 n}, \label{eq:E1Bound}
\end{eqnarray}
and by Chebyshev's inequality 
\begin{eqnarray}
\mathbb{P}(E_2^\mathsf{c}) &\leq & \frac{800\|M\|_*^2}{\mu_\gamma \sigma^2\eta^2 n^2} + \frac{400C_\gamma}{\mu_\gamma\eta^2 n} +  \frac{400 C_{\gamma, \epsilon}}{\mu_\gamma \eta^2 n^{1 - \epsilon}}. \label{eq:E2Bound}
\end{eqnarray}
Inequalities~\eqref{eq:E1Bound} and~\eqref{eq:E2Bound} together give
\begin{eqnarray}\label{boundOnComplementUnion}
\mathbb{P}[(E_1\cap E_2)^\mathsf{c}] \leq Ce^{-c\eta^2 n} +  \frac{800\|M\|_*^2}{\mu_\gamma \sigma^2\eta^2 n^2}  +  \frac{C_{\gamma, \epsilon}}{\mu_\gamma \eta^2 n^{1 - \epsilon}}.
\end{eqnarray}

To finish finding a bound on~\eqref{eq:secondTerm}, we find an upper bound on $\ee[\|\hat{M} - M\|^4_F]$.  Using inequality $(a+b)^2\leq 2a^2 + 2b^2$, we have
\begin{eqnarray*}
\|\hat{M} - M\|_F^4 &=& (\sum_{ij}(m_{ij} - \hat{m}_{ij})^2)^2\nonumber\\
&=& (\sum_{ij}(m_{ij} - x_{ij} + x_{ij} - \hat{m}_{ij})^2)^2\nonumber\\
&\leq & (2\sum_{ij}(m_{ij} - x_{ij})^2 + (x_{ij} - \hat{m}_{ij})^2)^2\nonumber\\
&=&  4(\sum_{ij}(m_{ij} - x_{ij})^2 + \sum_{ij}(x_{ij} - \hat{m}_{ij})^2)^2\nonumber\\
&\leq & 8(\sum_{ij}(m_{ij} - x_{ij})^2)^2 + 8(\sum_{ij}(x_{ij} - \hat{m}_{ij})^2)^2\nonumber \\
&=& 8\|M - X\|_F^4 + 8\|X-\hat{M}\|_F^4.
\end{eqnarray*}
Note that
\begin{eqnarray*}
\|\hat{M} - X\|_F^4 &=& \|\sum_{\lambda_i(X) < (2+\eta)\hat{\sigma}\sqrt{n}}\lambda_i(X) u_iv_i^T\|_F^4 \nonumber\\
&=& ( \|\sum_{\lambda_i(X) < (2+\eta)\hat{\sigma}\sqrt{n}}\lambda_i(X) u_iv_i^T\|_F^2)^2 \nonumber\\
&\leq & n^4(2 + \eta)^4{\hat{\sigma}}^4.
\end{eqnarray*}
Therefore
\begin{eqnarray}
\ee[\|\hat{M} - X\|_F^4] &\leq & n^4(2 + \eta)^4\ee[{\hat{\sigma}}^4], \label{eq:MhatMinusX}
\end{eqnarray}
and Lemma~\ref{fourthMomentSigmaHat} gives
\begin{eqnarray}
\ee[\|\hat{M} - X\|_F^4] &\leq & 4n^4(2 + \eta)^4(4\sigma^4 + \frac{C_{\epsilon, \gamma}\sigma^4}{n^{2 - \epsilon}} + \frac{64\|M\|_*^4}{n^4\mu_\gamma^2})
\end{eqnarray}

Now note that 
\begin{eqnarray*}
 \|X - M\|_F^4 &=& \sum_{i, j} (x_{ij} - m_{ij})^4 + \sum_{(i, j)\neq(l, k)} (x_{ij} - m_{ij})^2 (x_{lk} - m_{lk})^2 
\end{eqnarray*}
Since $x_{ij}$'s are i.i.d sub-Gaussian random variables and thus their forth moment is bounded, 
\begin{eqnarray}
\ee[\|X - M\|_F^4] &\leq & C\gamma^2 \sigma^4 n^4. \label{eq:XMinusM}
\end{eqnarray}
Inequalities~\eqref{eq:XMinusM} and~\eqref{eq:MhatMinusX} and Lemma~\ref{fourthMomentSigmaHat} give
\begin{eqnarray*}
\lefteqn{\ee\|\hat{M} - M\|_F^2  \leq  C_0\sqrt{n}\sigma\|M\|_* + }\\
& & \left(4n^4(2 + \eta)^4(C_1\sigma^4 + \frac{C_{\epsilon, \gamma}\sigma^4}{n^{2 - \epsilon}} + \frac{64\|M\|_*^4}{n^4\mu_\gamma^2})\right)^{1/2}\times\left(C_2e^{-c\eta^2 n} +  \frac{800\|M\|_*^2}{\mu_\gamma \sigma^2\eta^2 n^2}  +  \frac{C_{\gamma, \epsilon}}{\mu_\gamma \eta^2 n^{1 - \epsilon}}\right)^{1/2}.
\end{eqnarray*}
This completes the proof of Theorem~\ref{MSEMthm},
\begin{eqnarray*}
\lefteqn{\mse(\hat{M}) = \frac{1}{\gamma n^2}\ee[\|\hat{M} - M\|_F^2]\leq} \\
& & \frac{C_0\sigma\|M\|_*}{\gamma n\sqrt{n}} + (2 + \eta)^2\sigma^2\left(C_1 + C_2\frac{\|M\|_*^4}{\sigma^4\mu_\gamma^2 n^4}\right)^{1/2}\left(\frac{C_{\epsilon, \gamma}\sigma}{\mu_\gamma\eta^2 n^{1 - \epsilon}} + \frac{\|M\|_*^2}{n^2\sigma^2\eta^2\mu_\gamma}\right)^{1/2} \leq\\
& & \frac{C_0\sigma\|M\|_*}{\gamma n\sqrt{n}} + \sqrt{2}(2 + \eta)^2\sigma^2\left(C_1 + C_2\frac{\|M\|_*^4}{\sigma^4\mu_\gamma^2 n^4}\right)^{1/2}\max\{\sqrt{\frac{C_{\epsilon, \gamma}\sigma}{\mu_\gamma\eta^2 n^{1 - \epsilon}}}, \sqrt{\frac{\|M\|_*^2}{n^2\sigma^2\eta^2\mu_\gamma}}\}. \\
\end{eqnarray*}
\end{proof}
\subsection{Proofs of the lemmas} \label{ProofLemma}
\begin{lemma}\label{diffFsqrt}
For any $\epsilon > 0$ and $k\in\{0, 1, \ldots, \lfloor \sqrt{n}\rfloor\}$ almost surely
\begin{eqnarray}
|\sqrt{\mathbb{F}_n^{-1}(\frac{1}{2} + \frac{k}{n})} - \sqrt{\mathbb{F}_\gamma^{-1}(\frac{1}{2} + \frac{k}{n})}| \leq C_{\epsilon,\gamma} n^{-1/2 + \epsilon},
\end{eqnarray}
where $C_{\epsilon, \gamma}$ is a constant that only depends on $\gamma$ and $\epsilon$.
\end{lemma}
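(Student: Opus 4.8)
The plan is to reduce the claimed bound on the square roots of the quantiles to a uniform bound on the Kolmogorov distance $\|\mathbb{F}_n - \mathbb{F}_\gamma\|_\infty$, and then to establish the latter at the almost-sure rate $n^{-1/2+\epsilon}$ by combining a known expected rate of convergence in the Marchenko--Pastur theorem with a concentration argument. Throughout write $p = \tfrac12 + \tfrac{k}{n}$, $q_\gamma := \mathbb{F}_\gamma^{-1}(p)$ and $q_n := \mathbb{F}_n^{-1}(p)$, and note that since $0 \le k \le \lfloor\sqrt{n}\rfloor$ we have $|p - \tfrac12| \le n^{-1/2}$, so the argument is uniform in $k$ (and applies verbatim to $\tfrac12 - \tfrac kn$).

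First I would record the relevant behaviour of the limit law near the median. Because $p$ is bounded away from $0$ and $1$, the quantile $q_\gamma$ lies, for all large $n$, in a fixed compact neighbourhood $I$ of $\mu_\gamma = \mathbb{F}_\gamma^{-1}(\tfrac12)$ contained in the bulk $(\gamma_-, \gamma_+)$. On $I$ the Marchenko--Pastur density is continuous and strictly positive, so there is $c_\gamma > 0$ with $\tfrac{d}{dx}\mathbb{F}_\gamma(x) \ge c_\gamma$ for $x \in I$; moreover $q_\gamma \ge \gamma_- = (1-\sqrt\gamma)^2 > 0$, so $x \mapsto \sqrt{x}$ is Lipschitz on $I$ with a constant depending only on $\gamma$.

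Next I would pass from the uniform cdf bound to a quantile bound. Set $\delta_n := \|\mathbb{F}_n - \mathbb{F}_\gamma\|_\infty$. By definition of $q_n = \inf\{x : \mathbb{F}_n(x) \ge p\}$ one has $\mathbb{F}_n(x) < p$ for $x < q_n$ while $\mathbb{F}_n(q_n) \ge p$; since $\mathbb{F}_\gamma$ is continuous, letting $x \uparrow q_n$ in $\mathbb{F}_\gamma(x) \le \mathbb{F}_n(x) + \delta_n$ and combining with $\mathbb{F}_\gamma(q_n) \ge \mathbb{F}_n(q_n) - \delta_n$ gives $|\mathbb{F}_\gamma(q_n) - p| \le \delta_n$. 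As $\mathbb{F}_\gamma(q_\gamma) = p$ and (for large $n$) both $q_\gamma$ and $q_n$ lie in $I$, the mean value theorem together with the density lower bound $c_\gamma$ yields $|q_n - q_\gamma| \le c_\gamma^{-1}\delta_n$, whence the Lipschitz bound on the square root gives
\[
\big|\sqrt{q_n} - \sqrt{q_\gamma}\big| \le C_\gamma\,\delta_n.
\]
Thus the lemma follows once $\delta_n \le C_{\epsilon,\gamma}\,n^{-1/2+\epsilon}$ almost surely.

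The main work --- and the step I expect to be the principal obstacle --- is this almost-sure control of $\delta_n$. Here I would invoke a quantitative form of the Marchenko--Pastur theorem (due to Bai, and sharpened by G\"otze and Tikhomirov) for sample-covariance matrices with aspect ratio $m/n \to \gamma$ under the sub-Gaussian hypothesis, which gives $\ee\,\delta_n \le C_\gamma\, n^{-1/2}$ up to a logarithmic factor that is absorbed into $n^{\epsilon}$. To upgrade this to an almost-sure bound I would establish concentration of $\delta_n$: replacing a single column of $X$ alters $\tfrac1n XX^T$ by a matrix of rank at most two, so by the rank inequality for spectral distributions $\delta_n$ changes by at most $2/m$; McDiarmid's bounded-difference inequality applied to the $n$ independent columns then gives $\pp(|\delta_n - \ee\delta_n| > t) \le 2\exp(-c_\gamma t^2 n)$. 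Taking $t = n^{-1/2+\epsilon}$ makes the right-hand side summable in $n$, so Borel--Cantelli yields $\delta_n \le \ee\delta_n + n^{-1/2+\epsilon} \le C_{\epsilon,\gamma}\, n^{-1/2+\epsilon}$ for all large $n$ almost surely, and enlarging the constant to absorb the finitely many remaining $n$ completes the argument. The delicate point is purely the expected-rate input: one needs the $n^{-1/2}$ Kolmogorov rate for rectangular sample-covariance matrices under only a sub-Gaussian (rather than bounded or higher-moment) assumption, which is exactly what the cited random-matrix estimates supply.
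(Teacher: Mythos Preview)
Your approach is essentially the same as the paper's: both reduce the square-root quantile gap to the Kolmogorov distance $\Delta_{n,\gamma}^* = \sup_x|\mathbb{F}_n(x)-\mathbb{F}_\gamma(x)|$, use the fact that the Mar\v{c}henko--Pastur density is bounded and bounded away from zero near the median to convert this into a quantile bound, and then apply the Lipschitz property of $\sqrt{\,\cdot\,}$ away from zero. The only substantive difference is in justifying the almost-sure rate $\Delta_{n,\gamma}^* \le C_{\epsilon,\gamma}n^{-1/2+\epsilon}$: the paper simply cites this as a result of G\"otze and Tikhomirov, whereas you re-derive it by combining an expected-rate bound with a McDiarmid/rank-inequality concentration step and Borel--Cantelli. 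Your route is more self-contained but strictly more work; since the cited reference already delivers the almost-sure statement, you can shorten the argument by invoking it directly. Incidentally, your use of a density \emph{lower} bound in the quantile step is the correct direction and is cleaner than the corresponding passage in the paper.
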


\begin{proof}
Let 
\begin{eqnarray*}
\Delta_{n, \gamma}^* := \sup_x |\mathbb{F}_n(x) - \mathbb{F}_\gamma(x)|.
\end{eqnarray*}
In~\cite{Gotze04}, G\"otze and Tikhomirov have shown that for any $\epsilon > 0$,  the rate of almost sure convergence of $\Delta_{n, \gamma}^*$ is at most $O(n^{-1/2 + \epsilon})$. This means that there exist a constant $C_\epsilon$ such that $\Delta_{n, \gamma}^*\leq C_\epsilon n^{-1/2 + \epsilon}$ almost surely. 

For $t = \mathbb{F}_{\gamma}^{-1}(\frac{1}{2} + \frac{k}{n})$ and $\delta > 0$ such that $[t - \delta, t + \delta]\subset (\gamma_{-}, \gamma_{+})$ we have
\begin{eqnarray*}
\mathbb{F}_\gamma(t + \delta) - \mathbb{F}_\gamma(t - \delta) &=& \int_{t - \delta}^{t + \delta} d\mu_{MP}^\gamma(x) \\
&=& \frac{1}{2\gamma\pi}\int_{t - \delta}^{t + \delta} \frac{\sqrt{(\gamma_{+} - x)(x - \gamma_{-})}}{x}dx \\
&\leq & C_\gamma \int_{t - \delta}^{t + \delta}\frac{1}{x}dx \\
&=& -C_\gamma\log(1 - \frac{2\delta}{t + \delta}) \\
&=& C_\gamma\delta.
\end{eqnarray*}
Note that 
\begin{eqnarray}
|\mathbb{F}_n(\mathbb{F}_n^{-1}(\frac{1}{2} + \frac{k}{n})) - \mathbb{F}_\gamma(\mathbb{F}_n^{-1}(\frac{1}{2} + \frac{k}{n}))| \leq \Delta_{n, \gamma}^* \leq  C_\epsilon n^{-1/2 + \epsilon},
\end{eqnarray}
where the second inequality is almost sure. Now if $|\mathbb{F}_n^{-1}(\frac{1}{2} + \frac{k}{n}) - \mathbb{F}_\gamma^{-1}(\frac{1}{2} + \frac{k}{n})|\geq \delta$ then we have
\begin{eqnarray}
|\mathbb{F}_\gamma(\mathbb{F}_n^{-1}(\frac{1}{2} + \frac{k}{n})) - \mathbb{F}_\gamma(\mathbb{F}_\gamma^{-1}(\frac{1}{2} + \frac{k}{n}))| \geq C_\gamma\delta
\end{eqnarray}
and this is possible only for $\delta\leq C_\epsilon n^{-1/2 + \epsilon}$. Therefore 
\begin{eqnarray}
|\mathbb{F}_n^{-1}(\frac{1}{2} + \frac{k}{n}) - \mathbb{F}_\gamma^{-1}(\frac{1}{2} + \frac{k}{n})| \leq C_{\epsilon, \gamma} n^{-1/2 + \epsilon}
\end{eqnarray}
almost surely. Using the mean value theorem for function $f(x) = \sqrt{x}$ we have
\begin{eqnarray}
|\sqrt{\mathbb{F}_n^{-1}(\frac{1}{2} + \frac{k}{n})} - \sqrt{\mathbb{F}_\gamma^{-1}(\frac{1}{2} + \frac{k}{n})}| \leq C_{\epsilon,\gamma} n^{-1/2 + \epsilon}.
\end{eqnarray}
\end{proof}

\begin{lemma}\label{fourthMomentSigmaHat}
Let $X = [x_{ij}]$ be a $m\times n$ random matrix with $x_{ij}$'s distributed i.i.d. from a sub-Gaussian distribution such that $\ee(x_{ij}) = m_{ij}$ and $Var(x_{ij}) = \sigma^2$ for some unknown value of $\sigma$. For any arbitrary $\epsilon > 0$  and 
\begin{eqnarray*}
\hat{\sigma} = \frac{\emph{\text{med}}(\lambda_i(X))}{\sqrt{n\mu_\gamma}},
\end{eqnarray*}
we have
\begin{eqnarray*}
\ee[\hat{\sigma}^4]\leq 4\sigma^4 + \frac{C_{\epsilon, \gamma}\sigma^4}{n^{2 - \epsilon}} + \frac{64\|M\|_*^4}{n^4\mu_\gamma^2},
\end{eqnarray*}
where $C_{\epsilon, \gamma} > 0$ is a constant independent of $n$ and $\sigma$.
\end{lemma}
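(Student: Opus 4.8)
The plan is to reduce the fourth-moment bound to a deterministic, almost-sure estimate on the median singular value $\lambda_{m/2}(X)$, exactly in the spirit of the proof of Theorem~\ref{MSESigmathm}, and then raise that estimate to the fourth power. Since $\hat{\sigma} = \lambda_{m/2}(X)/\sqrt{n\mu_\gamma}$, we have $\hat{\sigma}^4 = \lambda_{m/2}(X)^4/(n^2\mu_\gamma^2)$, so it suffices to bound $\lambda_{m/2}(X)$ almost surely by a deterministic quantity and then divide by $n^2\mu_\gamma^2$. To decouple the signal $M$ from the noise $\sigma A$, I would apply the Weyl-type inequality~\eqref{bhatia} in the form~\eqref{first} with $k = \lfloor\sqrt{n}\rfloor$, giving
\[
\lambda_{m/2}(X) \le \lambda_{m/2 - k}(X - M) + \lambda_{k+1}(M),
\]
and control the signal term deterministically by~\eqref{upLambdaK}: $\lambda_{k+1}(M) \le \|M\|_*/(k+1) \le \|M\|_*/\sqrt{n}$.

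It remains to bound the noise term $\lambda_{m/2-k}(X - M) = \sigma\lambda_{m/2-k}(A) = \sigma\sqrt{n}\,\sqrt{\mathbb{F}_n^{-1}(\tfrac12 - \tfrac kn)}$ through the Mar\v{c}henko--Pastur law. By Lemma~\ref{diffFsqrt} at the shifted quantile, almost surely $\sqrt{\mathbb{F}_n^{-1}(\tfrac12 - \tfrac kn)} \le \sqrt{\mathbb{F}_\gamma^{-1}(\tfrac12 - \tfrac kn)} + C_{\epsilon,\gamma} n^{-1/2 + \epsilon}$, while the mean-value estimate~\eqref{boundFlambda} gives $\sqrt{\mathbb{F}_\gamma^{-1}(\tfrac12 - \tfrac kn)} \le \sqrt{\mu_\gamma} + C_\gamma n^{-1/2}$ since $\mathbb{F}_\gamma^{-1}(\tfrac12) = \mu_\gamma$. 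Combining, one obtains the almost-sure bound $\lambda_{m/2-k}(X - M) \le \sigma\sqrt{n\mu_\gamma} + \sigma C_{\epsilon,\gamma} n^{\epsilon}$.

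To assemble, I would square $\lambda_{m/2}(X) \le \lambda_{m/2-k}(X-M) + \lambda_{k+1}(M)$ using $(a+b)^2 \le 2a^2 + 2b^2$ and square once more, which separates a leading noise contribution from the pure signal contribution $\lambda_{k+1}(M)^4 \le \|M\|_*^4/n^2$; dividing by $n^2\mu_\gamma^2$, the former yields the $4\sigma^4$ term plus a lower-order $\sigma^4$ correction arising from the $n^{-1/2+\epsilon}$ fluctuation, the latter yields the $\|M\|_*^4/(n^4\mu_\gamma^2)$ term, and the cross term can be absorbed into these two by Young's inequality. Because every inequality above is almost sure with a deterministic right-hand side, taking expectations is immediate and no moment computation on the random singular values is required.

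The main obstacle is the noise term, and specifically that Lemma~\ref{diffFsqrt} rests on the almost-sure G\"otze--Tikhomirov convergence rate rather than on a bound holding for every realization and every $n$; a fully rigorous argument should account for the exceptional event on which the rate has not yet taken effect, but following the convention of the proof of Theorem~\ref{MSESigmathm} I would use the almost-sure bound directly inside the expectation. The secondary point is purely bookkeeping: it is the generous constant $4$, rather than the sharp leading constant $1$, that leaves room to fold the Mar\v{c}henko--Pastur fluctuation and the quantile-shift bias into genuinely lower-order terms, so I would not try to optimize constants and only keep the signal contribution cleanly proportional to $\|M\|_*^4/(n^4\mu_\gamma^2)$.
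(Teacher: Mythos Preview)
Your proposal uses exactly the same ingredients as the paper --- the Weyl interlacing inequality~\eqref{bhatia} with $k=\lfloor\sqrt{n}\rfloor$, the nuclear-norm bound~\eqref{upLambdaK}, and the almost-sure quantile estimate of Lemma~\ref{diffFsqrt} combined with the mean-value step~\eqref{boundFlambda} --- and treats the almost-sure bound inside the expectation just as the paper does. The only organizational difference is that the paper first centers, writing $\ee[\hat\sigma^4]\le 8\,\ee[(\hat\sigma-\sigma)^4]+8\sigma^4$, and then bounds $(\hat\sigma-\sigma)^4$ by repeating verbatim the two-sided argument of Theorem~\ref{MSESigmathm} (both~\eqref{first} and~\eqref{second}, the decomposition~\eqref{usefulDecomposition}, and Lemma~\ref{diffFsqrt}) at the fourth-power level; you instead bound $\lambda_{m/2}(X)$ directly from above, which is a little cleaner since only the one-sided inequality~\eqref{first} is needed.

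One quantitative point is worth flagging. If you expand $(\sigma\sqrt{n\mu_\gamma}+\sigma C_{\epsilon,\gamma}n^{\epsilon})^4$ sharply, the cross term $4a^3b$ leaves a $\sigma^4$ correction of order $n^{-1/2+\epsilon}$ rather than the $n^{-(2-\epsilon)}$ stated in the lemma; to recover the faster rate within your scheme you must also apply $(a+b)^4\le 8a^4+8b^4$ to the leading-plus-fluctuation split, at the cost of inflating the leading constant in front of $\sigma^4$. The paper's centering step sidesteps this tradeoff because $\hat\sigma-\sigma$ is already $O(n^{-1/2+\epsilon})$ almost surely, so its fourth power is $O(n^{-2+\epsilon})$ with no cross terms against the main $\sigma^4$. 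For the use in Theorem~\ref{MSEMthm} either rate is harmless (the correction is dominated by the constant $\sigma^4$ term), so your argument proves what is needed; but if you want the lemma exactly as stated, insert the centering $\hat\sigma^4\le 8(\hat\sigma-\sigma)^4+8\sigma^4$ before applying your one-sided bound to $\hat\sigma-\sigma$.
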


\begin{proof}
Using inequality $(a + b)^2 \leq 2a^2 + 2b^2$ we write
\begin{eqnarray*}
\ee[\hat{\sigma}^4] &\leq & 8\ee[(\hat{\sigma} - \sigma)^4] + 8\sigma^4.
\end{eqnarray*}
By adding and subtracting $\lambda_{m/2}(X - M)$ we have
\begin{eqnarray*}
\ee[(\hat{\sigma} - \sigma)^4] &\leq & \frac{8}{n^2\mu_\gamma^2}\ee[(\lambda_{m/2}(X) - \lambda_{m/2}(X - M))^4] \\
&+& \frac{8\sigma^4}{n^2\mu_\gamma^2}\ee[(\lambda_{m/2}(\frac{X - M}{\sigma}) - \sqrt{n\mu_\gamma})^4].
\end{eqnarray*}
Note that by Lemma~\ref{diffFsqrt}
\begin{eqnarray}
\frac{8\sigma^4}{n^2\mu_\gamma^2}\ee\left[\left(\lambda_{m/2}(\frac{X - M}{\sigma}) - \sqrt{n\mu_\gamma}\right)^4\right] & = & \frac{8\sigma^4}{\mu_\gamma^2}\ee\left[\left(\sqrt{\mathbb{F}_n^{-1}(\frac{1}{2})} - \sqrt{\mathbb{F}_\gamma^{-1}(\frac{1}{2})}\right)^4\right] \nonumber\\
&\leq & \frac{\sigma^4 C_{\epsilon, \gamma}}{\mu_\gamma^2 n^{2 - \epsilon}}. \label{beforelast}
\end{eqnarray}
Using decomposition~\eqref{usefulDecomposition} we have
\begin{eqnarray*}
\lefteqn{\ee\left[(\lambda_{m/2}(X) - \lambda_{m/2}(X - M))^4\right] } \\
%&\leq & 8\ee\left[(\lambda_{m/2 - k}(X - M) - \lambda_{m/2}(X - M))^4\right] \\
%& + & 8\ee\left[(\lambda_{m/2 + k}(X - M) - \lambda_{m/2}(X - M))^4\right] \\
%& + &  8(\frac{\|M\|_*}{k + 1})^4 \\
&\leq & 512 n^2 \sigma^4 \ee\left[\left(\sqrt{\mathbb{F}_n^{-1}(\frac{1}{2} - \frac{k}{n})} - \sqrt{\mathbb{F}_\gamma^{-1}(\frac{1}{2} - \frac{k}{n})}\right)^4\right] \\
& + & 512 n^2 \sigma^4 \ee\left[\left(\sqrt{\mathbb{F}_n^{-1}(\frac{1}{2} + \frac{k}{n})} - \sqrt{\mathbb{F}_\gamma^{-1}(\frac{1}{2} + \frac{k}{n})}\right)^4\right] \\
& + & 1028 n^2 \sigma^4 \ee\left[\left(\sqrt{\mathbb{F}_n^{-1}(\frac{1}{2})} - \sqrt{\mathbb{F}_\gamma^{-1}(\frac{1}{2})}\right)^4\right] \\
& + & 64 n^2\sigma^4 \left[\sqrt{\mathbb{F}_\gamma^{-1}(\frac{1}{2} + \frac{k}{n})} - \sqrt{\mathbb{F}_\gamma^{-1}(\frac{1}{2})} \right]^4 \\
& + & 64 n^2 \sigma^4\left[\sqrt{\mathbb{F}_\gamma^{-1}(\frac{1}{2} - \frac{k}{n})} - \sqrt{\mathbb{F}_\gamma^{-1}(\frac{1}{2})} \right]^4 \\
& + & 8(\frac{\|M\|_*}{k + 1})^4.
\end{eqnarray*}
Then Lemma~\ref{diffFsqrt} gives
\begin{eqnarray}\label{last}
\ee\left[(\lambda_{m/2}(\frac{X}{\sigma}) - \lambda_{m/2}(\frac{X - M}{\sigma}))^4\right] & \leq & C_{\epsilon, \gamma} n^\epsilon\sigma^4 + C_\gamma\sigma^4 + 8(\frac{\|M\|_*}{k + 1})^4.
\end{eqnarray}
Therefore~\eqref{last}, and~\eqref{beforelast} with $k = \lfloor\sqrt{n}\rfloor$ completes the proof,
\begin{eqnarray}
\ee[(\hat{\sigma} - \sigma)^4] &\leq &\frac{C_{\epsilon, \gamma}\sigma^4}{n^{2 - \epsilon}} + \frac{C_{\epsilon, \gamma}\sigma^4}{n^{2 - \epsilon}\mu_\gamma^2} + \frac{C_\gamma\sigma^4}{n^2\mu_\gamma^2} + \frac{64\|M\|_*^4}{n^4\mu_\gamma^2} \nonumber\\
&\leq & \frac{C_{\epsilon, \gamma}\sigma^4}{n^{2 - \epsilon}} + \frac{64\|M\|_*^4}{n^4\mu_\gamma^2}.
\end{eqnarray}
\end{proof}
\section*{Acknowledgement}
I am grateful to my advisor Sourav Chatterjee for his constant encouragement and insightful conversations and comments. I thank Matan Gavish and Amir Dembo for their helpful comments.


\begin{thebibliography}{99}

\bibitem[Achlioptas, D.(2007)]{Achlioptas} {\sc Achlioptas, D. and McSherry, F.}(2007) Fast Computation of Low-rank Matrix Approximations. \textit{J. ACM.},
\textbf{54}, no.2, Art. 9, 19pp.\MR{2295993}

\bibitem[Azar, Y.(2001)]{Azar} {\sc Azar, Y. Flat, A. Karlin, A. McSherry, F. and Sala, J.}(2001) Spectral Analysis of Data. \textit{Proceedings of the Thirty-third Annual ACM symposium on Theory of Computing}, 619--626.

\bibitem[Bai, Z. D.(1993)]{Bai} {\sc Bai, Z. D.}(1993) Convergence Rate of Expected Spectral Distributions of Large Random Matrices. Part I. Wigner Matrices. \textit{J. ACM.},\textbf{21}, no.2, 625--648.\MR{1217560}

\bibitem[Bhatia, R.(1997)]{Bhatia97} {\sc Bhatia, R.}(1997) Matrix analysis. Graduate Texts in Mathematics, 169. \textit{Springer-Verlag, New York,} \MR{1477662}

\bibitem[Cand\`es, E. J.(2010)]{Candes-Plan} {\sc Candes, E. and Plan, Y.}(2010) Matrix Completion With Noise. \textit{Ann. Statist.},
\textbf{98}, no.6, 925--936.

\bibitem[Cand\`es, E. J.(2009)]{Candes-Recht} {\sc Cand\`es, E. J. and Recht, B.}(2009) Exact Matrix Completion Via Convex Optimization. \textit{Found. Comput. Math.}, \textbf{9}, no.6, 717--772.\MR{2565240}

\bibitem[Cand\`es, E. J.(2006)]{Candes-Romberg-Tao} {\sc Cand\`es, E. J. Romberg, J. and Tao, T.}(2006) Robust Uncertainty Principles: Exact Signal Reconstruction From Highly Incomplete Frequency Information. \textit{IEEE Trans. Inform. Theory.},
\textbf{52}, no.2, 489--509.\MR{2236170}

\bibitem[Cand\`es, E. J.(2010)]{Candes-Shen-Cai} {\sc Cai, J. and Cand\`es, E. J. and Shen, Z.}(2010) A Singular Value Thresholding Algorithm For Matrix Completion. \textit{SIAM J. Optim.},
\textbf{20}, no.4, 1956--1982.\MR{2600248}

\bibitem[Cand\`es, E. J.(2010)]{Candes-Tao} {\sc Cand\`es, E. J. and Tao, T.}(2010) The Power of Convex Relaxation: Near Optimal Matrix Completion. \textit{IEEE Trans. Inform. Theory.},
\textbf{56}, no.5, 12053--2080.\MR{2723472}

\bibitem[Chatterjee, S.(2015)]{Chatterjee} {\sc Chatterjee, S.}(2015) Matrix Estimation By Universal Singular Value Thresholding. \textit{Ann. Statist.},
\textbf{43}, no.1, 177--214.\MR{3285604}

\bibitem[Donoho, D.(2014)]{Gavish-Donoho} {\sc Donoho, D. and Gavish, M.}(2014) Minimax Risk of Matrix Denoising By Singular Value Thresholding. \textit{Ann. Statist.},
\textbf{42},no.6, 2413--2440.\MR{3269984}

\bibitem[Fazel, M.(2002)]{Fazel} {\sc Fazel, M.}(2002) Matrix Rank Minimization With Applications. \textit{Ph.D. thesis, Stanford University}

\bibitem[Gavish, M.(2014)]{Gavish} {\sc Gavish, M. and Donoho, D. L.}(2014) The Optimal Hard Threshold For Singular Values Is {$4/\sqrt{3}$}. \textit{IEEE Trans. Inform. Theory.},
\textbf{60}, no.8, 5040--5053.

\bibitem[Gotze, F.(2004)]{Gotze04} {\sc Gotze, F.} and {\sc Tikhomirov, A.}(2004) Rate of convergence in probability to the Mar\v{c}henko-Pastur law. \textit{Bernoulli.},
\textbf{10}, no.3, 503--548 \MR{2061442}

\bibitem[Gotze, F.(2014)]{Gotze-Tikhomirov} {\sc Gotze, F. and Tikhomirov, A.}(2014) On the Rate of Convergence to the Mar\v{c}henko-Pastur Distribution. \textit{arXiv:1110.1284v3}

\bibitem[Keshavan, R. H.(2010)]{Keshavan1} {\sc Keshavan, R. H. Montanari, A. Oh, S.}(2010) Matrix Completion From Noisy Entries. \textit{J. Mach. Learn. Res.},
\textbf{11}, 2057--2078.\MR{2678022}

\bibitem[Keshavan, R. H.(2010)]{Keshavan2} {\sc Keshavan, R. H. Montanari, A. Oh, S.}(2010) Matrix Completion From a Few Entries. \textit{IEEE Trans. Inform. Theory.},
\textbf{56}, no.6, 2980--2998.\MR{2683452}

\bibitem[Koltchinskii, V.(2011)]{Koltchinskii} {\sc Koltchinskii, V.}(2011) Von Neumann Entropy Penalization And Low-rank Matrix Estimation. \textit{Ann. Statist.},
\textbf{39}, no.6, 2936--2973.\MR{3012397}

\bibitem[Koltchinskii, V.(2011)]{Koltchinskii-Lounici-Tsybakov} {\sc Koltchinskii, V. Lounici, K. and Tsybakov, A. B.}(2011) Nuclear-norm Penalization And Optimal Rates For Noisy Low-rank Matrix Completion. \textit{Ann. Statist.},
\textbf{39}, no.5, 2302--2329.\MR{2906869}

\bibitem[Kritchman, S.(2009)]{Nadler} {\sc Kritchman, S. and Nadler, B.}(2009) Non-parametric Detection of The Number of Signals: Hypothesis Testing And Random Matrix Theory. \textit{IEEE Trans. Signal Process.},
\textbf{57},no.10, 3930--3941.\MR{2683143}

\bibitem[Mazumder, R.(2010)]{Mazumder-Hastie-Tibshirani} {\sc Mazumder, R. Hastie, T. and Tibshirani, R.}(2010) Spectral Regularization Algorithms For Learning Large Incomplete Matrices. \textit{J. Mach. Learn. Res.},
\textbf{11}, 2287--2322.\MR{2719857}

\bibitem[Negahban, S.(2011)]{Negahban-Wainwright} {\sc Negahban, S. and Wainwright, M. J.}(2011) Estimation of (near) Low-rank Matrices With Noise And High-dimensional Scaling. \textit{Ann. Statist.},
\textbf{11},no.2, 1069--1097.\MR{2816348}

\bibitem[Rennie, J.D.(2005)]{Rennie} {\sc Rennie, J.D. and Srebro, N.}(2005) Fast Maximum Margin Factorization For Collaborative Prediction. \textit{ICML '05 Proceedings of the 22nd international conference on Machine learning}, 713--719.

\bibitem[Rohde, A.(2011)]{Rodhe-Tsybakov} {\sc Rohde, A. and Tsybakov, A. B.}(2011) Estimation of High-dimensional Low-rank Matrices. \textit{Ann. Statist.},
\textbf{39},no.2, 887--930.\MR{2816342}

\bibitem[Rudelson, M.(2007)]{Rudelson} {\sc Rudelson, M. and Vershynin, R.}(2007) Sampling From Large Matrices: An Approach Through Geometric Functional Analysis. \textit{J. ACM.},
\textbf{54},no.4, Art. 21, 19pp.\MR{2351844}

\bibitem[Rudelson, M.(2010)]{Rudelson-Vershynin} {\sc Rudelson, M. and Vershynin, R.}(2010) Non-asymptotic Theory of Random Matrices: Extreme Singular Values. \textit{Proceedings of the International Congress of Mathematicians. Volume III},
\textit{Hindustan Book Agency, New Delhi} 1576--1602.\MR{2827856}

\end{thebibliography}
\end{document}